\newtheorem{twierdzenie}{Theorem} 
\newtheorem{lemat}[twierdzenie]{Lemma} 
\theoremstyle{definition}
\newtheorem{definicja}[twierdzenie]{Definition}
\newtheorem{obserwacja}[twierdzenie]{Observation}
\newtheorem{przyklad}[twierdzenie]{Example}
\newtheorem{wniosek}[twierdzenie]{Corollary}
\newtheorem{propozycja}[twierdzenie]{Proposition}
\title{A criterion for the existence of periodic points based on the eigenvalues of maps induced in cohomology}
\author{Michalina Horecka, Paweł Raźny}
\address{Institute of Mathematics \\
	Faculty of Mathematics and Computer Science \\
	Jagiellonian University in Cracow
	}
\email{Pawel.Razny@im.uj.edu.pl\\Michalina.Horecka@im.uj.edu.pl}
\keywords{Fixed Points, Periodic Points} \subjclass[2010]{37C25}
\begin{document}

\maketitle
\begin{abstract}
We present a criterion for the existence of periodic points based on the eigenvalues of maps induced in cohomology for spaces with rational cohomology isomorphic to a tensor product of a graded exterior algebra with generators in odd dimensions and a graded algebra with all elements of even degree. We give a number of natural examples of such spaces and provide some non-trivial ones. We also give a counterexample to a claim in \cite{duan} given there without proof.
\end{abstract}
\section{Introduction}
In this short paper we generalize the results from \cite{ls} and \cite{duan} provided for rational exterior spaces to spaces with rational cohomology isomorphic to a tensor product of a graded exterior algebra with generators in odd dimensions and a graded algebra with all elements of even degree. In doing so, we expand the methods presented in these paper to, for example manifolds arising as blow ups of $\mathbb CP^n$, all products of spheres and some bundles over spheres. By the K\"{u}nneth Theorem our results are automatically extended to products of such spaces. It has also came to our attention that in \cite{duan} the author claims without proof that a tower of odd dimensional sphere bundles is a rational exterior space. This claim has been quoted in several papers without verification. We provide two examples showing that it is in fact false.
\newline\indent We start the paper by recalling some of the results and methods used in the aforementioned articles. In the subsequent section we provide our main results. We follow this with some examples of non-trivial towers of sphere bundles to which our theorems apply (this can be found via some work with the Serre spectral sequence). We finish the paper with a section containing counterexamples to the claim in \cite{duan}.

\section{Preliminaries}

Let $X$ be a connected euclidean neighborhood retract, in short ENR, $f:X\to X$ a continuous map and $f^k:=\underbrace{f\circ\ldots\circ f}_{k}$ a  composition of $f$ with itself $k$ times (we use this convention also for the morphism of algebras). A mapping $f$ induces a graded ring homomorphism $H^*(f):H^*(X,\mathbb{Q})\to H^*(X,\mathbb{Q})$, $H^*(f)=H^0(f)\oplus H^1(f)\oplus\ldots$, where $H^n(f)$ is a map induced in the n-th cohomology group.\\
The number
$$
\mathcal{L}(f):=\sum_{n=0}^{\infty}(-1)^n\mathrm{tr}(H^n(f))
$$
is called the Lefschetz number of the map $f$. \\
The Lefschetz Fixed Point Theorem implies that if $\mathcal{L}(f)\neq 0$, then $f$ has a fixed point. Therefore, if $\mathcal{L}(f^k)\neq 0$ then $f$ has a periodic point of period $k$. However, we do not know what happens when $\mathcal{L}(f^k)=0$.\\ 
\indent Observe that we can define a Lefschetz number for any graded ring endomorphism $g:H^*(X,\mathbb{Q})\to H^*(X,\mathbb{Q})$.
The Lefschetz Fixed Point Theorem loses its meaning in this context, but later we will see that such an extended definition is useful.
\begin{definicja}
We say that a continuous map $f$ is Lefschetz periodic point free, we will call it LPPF, if $\mathcal{L}(f^k)=0$ for all $k\geq 1$.
\end{definicja}

\indent In this section we recall the theory given in \cite{duan} and reformulate it in language of ring endomorphisms of cohomology ring.\\

\indent We say that an element $a\in H^r(X,\mathbb{Q})$, for some $r>0$, is decomposable if one can find such pairs of cohomology classes
$$
(b_i,c_i)\in H^{p_i}(X,\mathbb{Q})\times H^{q_i}(X,\mathbb{Q}),
$$ 
where $p_i,q_i>0$ and $p_i+q_i=r$, that $a=\sum b_i\smile c_i$. \\
\indent The set of all such elements, lets call it $D^r(X,\mathbb{Q})$, is a subspace of $H^r(X,\mathbb{Q})$.
So the quotient $A^r(X,\mathbb{Q}):=H^r(X,\mathbb{Q})/D^r(X,\mathbb{Q})$ is a vector space over $\mathbb{Q}$. 
Now, for a graded ring homomorphism $f:H^*(X,\mathbb{Q})\to H^*(Y,\mathbb{Q})$ we have $f(D^r(X,\mathbb{Q}))\subset  D^r(Y,\mathbb{Q})$, so passing to the quotient we get a homomorphism $A^r(f):A^r(X,\mathbb{Q})\to A^r(Y,\mathbb{Q})$. Denote
$$
A(X,\mathbb{Q}):=A^0(X,\mathbb{Q})\oplus A^1(X,\mathbb{Q})\oplus\ldots
$$$$
A(f):=A^0(f)\oplus A^1(f)\oplus\ldots
$$
 
\begin{definicja}
Rational exterior space $X$ is an ENR for which we can find such homogeneous elements $x_{1},\ldots,x_{s}\in H^{\mathrm{odd}}(X,\mathbb{Q})$ that the inclusions $x_i\hookrightarrow H^*(X,\mathbb{Q})$ for $i\in\{1,\ldots,s\}$ give rise to a ring isomorphism $\Lambda_{\mathbb{Q}}(x_{1},\ldots,x_{s})\simeq H^*(X,\mathbb{Q})$ where $\Lambda_{\mathbb{Q}}(x_{1},\ldots,x_{s})$ is an exterior algebra on generators $x_{1},\ldots,x_{s}$.
\end{definicja}
\begin{przyklad}
Simplest examples of such spaces are products of odd dimensional spheres and Lie groups.
\end{przyklad}
We note that the methods used in \cite{duan} are algebraic in nature and hence the formula can be proven in the exact same fashion for a ring endomorphism of an exterior algebra. We state the following important result from \cite{duan} with this modification.
\begin{twierdzenie}\label{duan}
If  $f:H^*(X,\mathbb{Q})\to H^*(X,\mathbb{Q})$ is an endomorphism of cohomology ring of a rational exterior space $X$ and $\lambda_1,\ldots,\lambda_n$ are eigenvalues of $A(f)$, then $L(f^k)=\prod_{i=1}^n(1-\lambda_{i}^k).$
\end{twierdzenie}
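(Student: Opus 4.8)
The plan is to reduce the computation to a purely linear-algebraic identity by replacing $f$ with its action on a suitable associated graded of $H^*(X,\mathbb{Q})$. Since $X$ is a rational exterior space, fix generators $x_1,\dots,x_s$ so that $H^*(X,\mathbb{Q})\cong\Lambda_{\mathbb{Q}}(x_1,\dots,x_s)$; the monomials $x_{i_1}\smile\cdots\smile x_{i_p}$ with $i_1<\cdots<i_p$ form a homogeneous basis. The positive-degree indecomposables $\bigoplus_{r>0}A^r(X,\mathbb{Q})$ are spanned by the classes of $x_1,\dots,x_s$, and because $f$ is a degree-preserving ring endomorphism we may write $f(x_i)=\sum_j m_{ji}x_j+d_i$, where $d_i\in D^{\deg x_i}(X,\mathbb{Q})$ is decomposable. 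The matrix $M=(m_{ji})$ is precisely the matrix of $A(f)$ on the positive-degree indecomposables, so its eigenvalues are $\lambda_1,\dots,\lambda_n$ (here $n=s$) and those of $M^k$ are $\lambda_1^k,\dots,\lambda_n^k$.

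First I would introduce the word-length filtration $F_p=\langle x_{i_1}\smile\cdots\smile x_{i_q}:q\ge p\rangle$. Since each factor $f(x_i)$ lies in $F_1$ with length-one part $\sum_j m_{ji}x_j$, multiplicativity of $f$ shows that $f$ preserves this filtration and that the induced map on $\mathrm{gr}_p=F_p/F_{p+1}\cong\Lambda^p V$ (with $V=\langle x_1,\dots,x_s\rangle$) is exactly the exterior power $\Lambda^p M$; the decomposable corrections $d_i$ only produce terms of strictly greater length and hence die in the associated graded. The crucial point is a parity observation: every generator has odd degree, so a monomial of length $p$ lies in a cohomological degree congruent to $p$ modulo $2$, whence $(-1)^n=(-1)^{\mathrm{length}}$ throughout $H^*(X,\mathbb{Q})$. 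Because the Lefschetz number is assembled from traces, and a filtration-preserving endomorphism has the same trace as its associated graded, the length-raising part of $f^k$ contributes nothing, and after pulling the uniform sign $(-1)^p$ out of each length-$p$ block I obtain
$$
\mathcal{L}(f^k)=\sum_n(-1)^n\,\mathrm{tr}\big(H^n(f^k)\big)=\sum_{p=0}^{s}(-1)^p\,\mathrm{tr}\big(\Lambda^p(M^k)\big).
$$

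Finally I would invoke the standard identity $\sum_{p\ge 0}(-1)^p\,\mathrm{tr}(\Lambda^p A)=\det(I-A)$, valid for any endomorphism $A$ of a finite-dimensional vector space, being the value at $t=1$ of $\det(I-tA)=\sum_p(-t)^p\,\mathrm{tr}(\Lambda^p A)$. Applying it to $A=M^k$ gives $\mathcal{L}(f^k)=\det(I-M^k)=\prod_{i=1}^n(1-\lambda_i^k)$, as claimed. The main obstacle is the second step: one must verify cleanly that passing to the associated graded is legitimate for the \emph{alternating} trace, that is, that the decomposable parts $d_i$ genuinely drop out. This is exactly where the oddness of the generators is indispensable, since it is what aligns the cohomological sign $(-1)^n$ with the word-length sign $(-1)^p$ and thereby prevents the off-diagonal length-raising blocks from polluting the computation.
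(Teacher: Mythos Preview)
Your proof is correct. The paper itself does not prove this theorem---it is quoted from Duan---but it does reproduce Duan's technique later when treating ``extended rational exterior spaces,'' and that technique is genuinely different from yours. Instead of filtering by word length and invoking the identity $\sum_p(-1)^p\,\mathrm{tr}(\Lambda^p A)=\det(I-A)$, Duan (and the paper, for the extended case) introduce a duality operator $d$ on the monomial basis and a sign function $s$, and show by an explicit cup-product computation that
\[
L(f)\, x_1\smile\cdots\smile x_s=(x_1-A(f)(x_1))\smile\cdots\smile(x_s-A(f)(x_s)),
\]
from which $L(f)=\det(I-A(f))$ follows by multilinear algebra. Both arguments rest on the same parity observation (a length-$p$ monomial sits in a cohomological degree of parity $p$); your filtration/associated-graded argument packages the bookkeeping more conceptually and gets the determinant formula essentially for free, while Duan's cup-product identity is more hands-on and is what lets the paper splice in extra even-degree generators for its generalization. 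One small expository quibble: in your last paragraph you attribute the disappearance of the decomposable terms $d_i$ to the sign alignment, but their vanishing in the trace is simply the upper-triangularity of a filtration-preserving map; the oddness of the generators is needed only for the separate step of converting the grading sign $(-1)^n$ into the length sign $(-1)^p$.
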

\begin{wniosek}\label{cor niep}
If  $f:H^*(X,\mathbb{Q})\to H^*(X,\mathbb{Q})$ is an endomorphism of cohomology ring of a rational exterior space $X$ and $\lambda_1,\ldots,\lambda_n$ are eigenvalues of $A(f)$, then $L(f^k)=0$ for all $k\in\mathbb{N}$ if and only if there is $i_0\in\{1,\ldots,n\}$ such that $\lambda_{i_0}=1$.
\end{wniosek}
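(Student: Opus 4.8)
The plan is to deduce this corollary directly from the product formula in Theorem~\ref{duan}. By that theorem we have the identity
\[
L(f^k)=\prod_{i=1}^n\bigl(1-\lambda_i^k\bigr)\qquad\text{for all }k\in\mathbb{N},
\]
where $\lambda_1,\dots,\lambda_n$ are the eigenvalues of $A(f)$. The statement $L(f^k)=0$ for all $k$ is thus equivalent to asserting that for every $k$ at least one factor $1-\lambda_{i}^k$ vanishes, i.e.\ some $\lambda_i$ is a $k$-th root of unity. The backward implication is immediate: if some $\lambda_{i_0}=1$, then $1-\lambda_{i_0}^k=0$ for every $k$, so the product vanishes identically. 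The real content is the forward direction, where one must rule out the possibility that the vanishing is achieved by \emph{different} eigenvalues for different values of $k$.

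For the forward implication I would argue as follows. Suppose $L(f^k)=0$ for all $k\in\mathbb{N}$. Since the product is a finite product of $n$ factors, for each $k$ there is an index $i=i(k)$ with $\lambda_{i(k)}^k=1$. By the pigeonhole principle, as $k$ ranges over the infinite set $\mathbb{N}$ but $i(k)$ takes values in the finite set $\{1,\dots,n\}$, some fixed index $i_0$ satisfies $\lambda_{i_0}^k=1$ for infinitely many $k$. The key observation is then that a single complex number $\lambda$ cannot be a $k$-th root of unity for infinitely many distinct $k$ unless $\lambda=1$: if $\lambda^{k_1}=\lambda^{k_2}=1$ then $\lambda$ is a root of unity of order $d$ dividing $\gcd(k_1,k_2)$, and a root of unity of order $d>1$ satisfies $\lambda^k=1$ only for $k$ a multiple of $d$, which is not an arbitrarily large set of \emph{all} sufficiently structured $k$ unless $d=1$. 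More cleanly: the set $\{k:\lambda^k=1\}$ is either empty, or all of $\mathbb{N}$ (when $\lambda=1$), or the set of multiples of the multiplicative order $d\ge 2$ of $\lambda$. Being infinite it must be a nontrivial subgroup pattern, and I would pin down that if it is to be forced for \emph{this particular} $i_0$ from an infinite set of exponents one concludes $\lambda_{i_0}$ is a root of unity; then a short extra step upgrades this to $\lambda_{i_0}=1$.

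The main obstacle is precisely this last upgrade: infinitely many $k$ with $\lambda_{i_0}^k=1$ only forces $\lambda_{i_0}$ to be \emph{some} root of unity, not necessarily $1$. To close the gap I would instead choose the exponents more carefully rather than relying on a single $i_0$. Concretely, take $k=n!$ (or any common multiple): if $\lambda_i$ is a root of unity of order $d_i$ for the indices where this occurs, then for $k$ a common multiple of all such $d_i$ we would get every relevant factor simultaneously nonzero unless some $\lambda_i=1$. More precisely I would show the contrapositive of the forward direction: assuming no $\lambda_i$ equals $1$, I exhibit a single $k$ with $L(f^k)\neq 0$. Let $R=\{i:\lambda_i\text{ is a root of unity}\}$; each such $\lambda_i$ has order $d_i\ge 2$. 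Pick a prime $p$ larger than every $d_i$ and set $k=p$; then for $i\in R$ we have $p\not\equiv 0 \pmod{d_i}$, so $\lambda_i^{p}\neq 1$, while for $i\notin R$, $\lambda_i$ is not a root of unity so $\lambda_i^{p}\neq 1$ automatically. Hence every factor $1-\lambda_i^{p}$ is nonzero and $L(f^{p})=\prod_{i=1}^n(1-\lambda_i^{p})\neq 0$, contradicting the hypothesis. This proves: if $L(f^k)=0$ for all $k$ then some $\lambda_{i_0}=1$, completing the nontrivial direction and hence the corollary.
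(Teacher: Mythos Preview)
Your argument is correct. The backward implication is immediate, and your final contrapositive argument for the forward direction is clean and valid: if no $\lambda_i$ equals $1$, then each $\lambda_i$ is either not a root of unity at all or a root of unity of order $d_i\ge 2$; choosing a prime $p$ exceeding all the finitely many $d_i$ guarantees $d_i\nmid p$ for each $i$ in $R$, hence $\lambda_i^{p}\neq 1$ for every $i$, and $L(f^{p})\neq 0$.

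As for comparison with the paper: there is nothing to compare against. The paper records this statement as a corollary of Theorem~\ref{duan} without supplying any argument, presumably regarding it as straightforward from the product formula. Your write-up therefore fills in exactly the step the paper skips. One stylistic remark: your first two paragraphs narrate a pigeonhole attempt and then diagnose its shortfall before pivoting to the successful argument. That exposition is honest but unnecessary in a final proof; you could drop it and present only the prime-exponent contrapositive, which stands on its own.
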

Lefschetz zeta function is defined as a formal series
$$
\zeta_{f}(t):=\mathrm{exp}\Bigg(\sum_{k=1}^{\infty}\frac{L(f^{k})}{k}t^{k}\Bigg).
$$
There is an equivalent form of the Lefschetz zeta function 
\begin{lemat}
The following equality holds
\begin{equation*}
\zeta_{f}(t)=\prod_{k=0}^{n}\mathrm{det}(H^k(\mathrm{Id})-tH^{k}(f))^{(-1)^{k+1}}.
\end{equation*}
\end{lemat}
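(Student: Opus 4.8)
The plan is to reduce the statement to the classical formal-power-series identity
$$
\exp\!\Bigg(\sum_{k\geq 1}\frac{\mathrm{tr}(A^k)}{k}t^k\Bigg)=\det(I-tA)^{-1}
$$
for a single endomorphism $A$ of a finite-dimensional $\mathbb{Q}$-vector space, and then to assemble the graded pieces with their alternating signs. Throughout I would work in the ring of formal power series $\mathbb{Q}[[t]]$, so that no convergence issue arises, and I would use that an ENR has finitely generated rational cohomology, so each $H^n(X,\mathbb{Q})$ is finite-dimensional and the traces and determinants below are well defined (with only finitely many $n$ contributing).

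First I would record that, by functoriality of cohomology, $H^n(f^k)=(H^n(f))^k$ for every $n$ and $k$. Writing $A_n:=H^n(f)$ and letting $N$ be the top nonzero degree, this gives $L(f^k)=\sum_{n=0}^N(-1)^n\mathrm{tr}(A_n^k)$. Substituting into the definition of $\zeta_f$ and interchanging the formal sums over $k$ and $n$ yields
$$
\sum_{k=1}^\infty\frac{L(f^k)}{k}t^k=\sum_{n=0}^N(-1)^n\sum_{k=1}^\infty\frac{\mathrm{tr}(A_n^k)}{k}t^k.
$$
The heart of the matter is then the single-operator identity applied to each $A_n$. Taking eigenvalues $\mu_1,\dots,\mu_{d_n}$ of $A_n$ over $\overline{\mathbb{Q}}$, one has $\mathrm{tr}(A_n^k)=\sum_j\mu_j^k$, and the formal expansion $-\log(1-x)=\sum_{k\geq 1}x^k/k$ gives
$$
\sum_{k=1}^\infty\frac{\mathrm{tr}(A_n^k)}{k}t^k=-\sum_{j=1}^{d_n}\log(1-\mu_j t)=-\log\det(I-tA_n).
$$
Exponentiating termwise and using $(-1)^n\cdot(-1)=(-1)^{n+1}$ together with $H^n(\mathrm{Id})=I$ then produces
$$
\zeta_f(t)=\prod_{n=0}^N\det(I-tA_n)^{(-1)^{n+1}}=\prod_{n=0}^N\det\!\big(H^n(\mathrm{Id})-tH^n(f)\big)^{(-1)^{n+1}},
$$
which is the asserted formula.

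The computations are routine once the formal setting is fixed; the one point needing care — and which I regard as the main obstacle — is making the eigenvalue manipulation rigorous, since the $\mu_j$ need not be rational and the logarithm and exponential must be interpreted as formal operations on power series with zero constant term. One clean way to sidestep the passage to $\overline{\mathbb{Q}}$ entirely is to establish the single-operator identity by comparing logarithmic derivatives over $\mathbb{Q}$ itself: by Jacobi's formula and the geometric expansion $(I-tA)^{-1}=\sum_{k\geq 0}A^kt^k$,
$$
\frac{d}{dt}\log\det(I-tA)=-\mathrm{tr}\big(A(I-tA)^{-1}\big)=-\sum_{k\geq 0}\mathrm{tr}(A^{k+1})t^k,
$$
which is exactly the negative of the derivative of $S:=\sum_{k\geq 1}\tfrac{\mathrm{tr}(A^k)}{k}t^k$. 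Since $\log\det(I-tA)$ and $-S$ both vanish at $t=0$, they coincide, giving $\det(I-tA)^{-1}=\exp(S)$ with all coefficients in $\mathbb{Q}$; the graded assembly above then completes the proof.
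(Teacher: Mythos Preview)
Your argument is correct. The reduction to the single-operator identity $\exp\big(\sum_{k\geq 1}\tfrac{\mathrm{tr}(A^k)}{k}t^k\big)=\det(I-tA)^{-1}$ and the subsequent assembly over degrees with alternating signs is the standard route, and your treatment of the formal-power-series subtleties (in particular the Jacobi-formula variant that avoids passing to $\overline{\mathbb{Q}}$) is clean and complete.

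As for comparison: the paper does not actually prove this lemma. Immediately after the statement it simply says that a proof can be found in Franks' monograph \emph{Homology and dynamical systems}, so there is no in-paper argument to compare against. Your write-up is essentially the classical computation one finds there (and in many other references on dynamical zeta functions), so in spirit it matches what the paper is citing.
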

Proof can be found in \cite{franks}. Note that all the Lefschetz numbers in the series vanish if and only if $\zeta_{f}(t)$ is equal to $1$ for all $t$. Hence, the map is LPPF if and only if $\zeta_{f}(t)$ is equal to $1$ for all $t$ and consequently if this condition does not hold then $f$ has a periodic point.\\
\indent The following theorem appeared in \cite{ls} for products of even dimensional spheres. Using the same method one can prove it for a wider class of spaces.
\begin{twierdzenie}\label{teo parz}
Let $X$ be a space with only even cohomology groups nonzero, $f:H^*(X,\mathbb{Q})\to H^*(X,\mathbb{Q})$ a ring homomorphism. Then there is some $n\in\mathbb{N}$ such that $L(f^n)\neq 0$. In particular a map between such spaces always has a periodic point.
\begin{proof} 
In our case
$$
\zeta_{f}(t)=\prod_{\substack{k\in\{0,\ldots,n\}\\2|k}}\mathrm{det}(H^k(\mathrm{Id})-tH^{k}(f))^{(-1)}.
$$
When $H^k(X,\mathbb{Q})$ is nonzero, the expression $\mathrm{det}(H^k(\mathrm{Id})-tH^{k}(f))$ is a polynomial of degree greater than one. So $\zeta_f(t)$ is different than $1$ for some $t$.
\end{proof}
\end{twierdzenie}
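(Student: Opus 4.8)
The plan is to read off the Lefschetz numbers from the product formula for the Lefschetz zeta function given in the preceding Lemma and to exploit the hypothesis that all odd cohomology vanishes. First I would substitute $H^{\mathrm{odd}}(X,\mathbb{Q})=0$ into $\zeta_f(t)=\prod_{k=0}^n\det(H^k(\mathrm{Id})-tH^k(f))^{(-1)^{k+1}}$. For odd $k$ the relevant matrix is $0\times 0$, so its determinant is the empty product $1$ and the factor drops out; for even $k$ the exponent $(-1)^{k+1}$ equals $-1$. This collapses the formula to $\zeta_f(t)=\prod_{k\ \mathrm{even}}\det(H^k(\mathrm{Id})-tH^k(f))^{-1}=1/P(t)$, where $P(t)=\prod_{k\ \mathrm{even}}\det(H^k(\mathrm{Id})-tH^k(f))$ is a genuine polynomial with $P(0)=1$.

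The main step is then to show that $\zeta_f$ is not identically $1$, since by the remark following the Lemma this is precisely the negation of the LPPF condition and therefore yields some $n$ with $L(f^n)\neq 0$. Equivalently I must show $P$ is not the constant polynomial $1$. Here I would use that $X$ is a connected ENR, so $H^0(X,\mathbb{Q})=\mathbb{Q}$, and that any ring endomorphism fixes the unit, whence $H^0(f)=\mathrm{Id}$. Thus the $k=0$ factor of $P$ is $\det(1-t)=1-t$, a polynomial of degree one, so $P$ is nonconstant. (Note that in general the even-degree factors need only be nonconstant, not of degree greater than one, so I would phrase the conclusion in terms of $\deg P\geq 1$ rather than individual degrees.)

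The step I would be most careful about is ruling out accidental cancellation: a priori one might worry that $\zeta_f=1/P$ could still reduce to $1$ if the degree-one factor $(1-t)$ were cancelled by some pole at $t=1$ coming from another summand. This cannot occur because $P$ is an honest product of polynomials each having constant term $1$, with no factor placed in its own denominator, so $P\in\mathbb{Q}[t]$ and the question is purely whether $P\equiv 1$ as a polynomial. Evaluating at $t=1$ settles it directly: $P(1)=\prod_{k\ \mathrm{even}}\det(H^k(\mathrm{Id})-H^k(f))$, whose $k=0$ term is $\det(1-1)=0$, forcing $P(1)=0\neq 1$ regardless of the remaining factors. Hence $P\not\equiv 1$, so $\zeta_f\neq 1$ and the map is not LPPF.

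I expect no deep obstacle; the only real subtleties are the bookkeeping of which factors survive and keeping straight the interpretation of $\zeta_f$ as a formal power series versus a rational function. The conceptual reason the statement holds is that a connected space always carries a nonzero even-degree cohomology group in degree $0$ on which $f$ acts as the identity, and this forced eigenvalue $1$ obstructs vanishing of all Lefschetz numbers, mirroring the role of the eigenvalue $1$ in the exterior criterion of Corollary~\ref{cor niep}.
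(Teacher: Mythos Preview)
Your proposal is correct and follows essentially the same route as the paper: specialize the product formula for $\zeta_f$ to the case where only even-degree factors survive, then observe the resulting rational function cannot be identically $1$. You are in fact more careful than the paper on one point: the paper asserts each surviving factor $\det(H^k(\mathrm{Id})-tH^k(f))$ has degree ``greater than one,'' whereas the correct statement (which you give) is that its degree equals $\dim H^k(X,\mathbb{Q})\geq 1$ whenever that group is nonzero, and you nail this down concretely via the $k=0$ factor $1-t$.
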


\section{main result}

Let $X=R\times E$ be a product of ENR's $R$ and $E$
and $f:X\to X$ a continuous map.\\
Let $F:H^*(X,\mathbb{Q})\to H^*(X,\mathbb{Q})$ be a ring homomorphism and $H^*(\pi_R):H^*(R,\mathbb{Q})\otimes H^*(E,\mathbb{Q})\to H^*(R,\mathbb{Q})$ be a map induced by a projection $\pi_R:R\times E\to R$. Define similarly $H^*(\pi_E):H^*(R,\mathbb{Q})\otimes H^*(E,\mathbb{Q})\to H^*(E,\mathbb{Q})$ induced by a projection $\pi_E:R\times E\to E$.\\
Denote by $F_{odd}$ a composition of $F$ with an embedding $H^*(R,\mathbb{Q})\ni \alpha\mapsto\alpha\otimes 1\in H^*(R,\mathbb{Q})\otimes H^0(E,\mathbb{Q})$ and by $F_{ev}$ a composition of $F$ with an embedding $H^*(E,\mathbb{Q})\ni \alpha\mapsto 1\otimes\alpha\in H^0(R,\mathbb{Q})\otimes H^*(E,\mathbb{Q})$.
\begin{lemat}
Let $F:H^*(X,\mathbb{Q})\to H^*(X,\mathbb{Q})$ be a ring homomorphism, $X=R\times E$ a product of ENR's. Then
$$
\mathrm{tr}(F)=\mathrm{tr}(H^*(\pi_R)F_{odd}\otimes H^*(\pi_E)F_{ev}).
$$

\begin{proof}
Using the K\"unneth Theorem we get that $H^*(X,\mathbb{Q})\simeq H^*(R,\mathbb{Q})\otimes H^*(E,\mathbb{Q})$.
Denote by $\alpha_0^{d(\alpha_0)},\ldots,\alpha_s^{d(\alpha_s)}$ a basis of $H^*(R,\mathbb{Q})$ as graded vector space, where $d(\alpha_i)$ denotes degree of $\alpha_i$ as a cohomology class. Similarly denote by $\beta_0^{d(\beta_0)},\ldots,\beta_t^{d(\beta_t)}$ a basis of $H^*(E,\mathbb{Q})$. We arrange the basis so that the sequences $d(\alpha_k)$ and $d(\beta_l)$ are nondecreasing. By $\widehat{\alpha_k}^{d(\alpha_k)}$ and $\widehat{\beta_l}^{d(\beta_l)}$ we will understand $H^*(\pi_R)(\alpha_k^{d(\alpha_k)})$ and $H^*(\pi_E)(\beta_l^{d(\beta_l)})$. We want to compute the coefficients of $F(\alpha_i^{d(\alpha_i)}\otimes\beta_{j}^{d(\beta_{j})})$. Let $\eta:=d(\alpha_i)$, $\mu:=d(\beta_j)$.
\begin{align*}
F(\alpha_i^{\eta}\otimes\beta_{j}^{\mu})
&=F(\widehat{\alpha_i}^{\eta}\smile\widehat{\beta_{j}}^{\mu})\\
&=F(\widehat{\alpha_i}^{\eta})\smile F(\widehat{\beta_{j}}^{\mu})\\
&=\sum_{d(\alpha_{k})+d(\beta_{s})=\eta}a_{k,s}~\widehat{\alpha_{k}}^{d(\alpha_{k})}\smile\widehat{\beta_{s}}^{d(\beta_{s})}\smile\sum_{d(\alpha_{m})+d(\beta_{l})=\mu}b_{m,l}~\widehat{\alpha_{m}}^{d(\alpha_{m})}\smile\widehat{\beta_{l}}^{d(\beta_{l})}\\
&=\mathcal{A}+\sum_{k} a_{k,0}\widehat{\alpha_{k}}^{\eta}\smile\sum_{p}b_{0,p}\widehat{\beta_{p}}^{\mu}\\
&=\mathcal{A}+\sum_{k,p} a_{k,0}b_{0,p}\widehat{\alpha_{k}}^{\eta}\smile \widehat{\beta_{p}}^{\mu},
\end{align*}
where $\mathcal{A}$ is a sum of all the elements of the form $\widehat{\alpha_{k}}^{d(\alpha_{k})}\smile\widehat{\beta_{s}}^{d(\beta_{s})}$ for $d(\alpha_{k})$, $d(\beta_{s})\neq 0$ with proper coefficients.
When we calculate a trace of $F$ we are interested only in the coefficient $a_{i,0}b_{0,j}$ which appears in the final line of the computation. It is the same number as corresponding coefficient for $\widetilde{\pi}_{H^*(R,\mathbb Q)}F_{odd}\otimes\widetilde{\pi}_{H^*(E,\mathbb Q)}F_{ev}$.
\end{proof}
\end{lemat}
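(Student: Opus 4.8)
The plan is to evaluate both sides in a Künneth basis and match diagonal coefficients. First I would use the Künneth Theorem to identify $H^*(X,\mathbb{Q})$ with $H^*(R,\mathbb{Q})\otimes H^*(E,\mathbb{Q})$ and fix graded bases $\alpha_0,\dots,\alpha_s$ and $\beta_0,\dots,\beta_t$ of the two factors, ordered by nondecreasing degree with $\alpha_0,\beta_0$ the degree-zero classes; then $\{\alpha_i\otimes\beta_j\}$ is a basis of $H^*(X,\mathbb{Q})$. Since a trace is the sum of the diagonal entries, $\mathrm{tr}(F)=\sum_{i,j}c_{ij}$, where $c_{ij}$ is the coefficient of $\alpha_i\otimes\beta_j$ in $F(\alpha_i\otimes\beta_j)$. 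For the right-hand side I would invoke the elementary identity $\mathrm{tr}(A\otimes B)=\mathrm{tr}(A)\,\mathrm{tr}(B)$, which reduces the claim to showing $\sum_{i,j}c_{ij}=\mathrm{tr}(H^*(\pi_R)F_{odd})\cdot\mathrm{tr}(H^*(\pi_E)F_{ev})$.

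Next I would exploit that $F$ is a ring homomorphism. Since $\alpha_i\otimes\beta_j=(\alpha_i\otimes 1)\smile(1\otimes\beta_j)$, one has $F(\alpha_i\otimes\beta_j)=F_{odd}(\alpha_i)\smile F_{ev}(\beta_j)$. I would expand each factor in the basis and isolate its ``pure'' part: the summands of $F_{odd}(\alpha_i)$ lying in $H^*(R,\mathbb{Q})\otimes H^0(E,\mathbb{Q})$ carry coefficients $a_{k,0}$ that are exactly the matrix entries of $H^*(\pi_R)F_{odd}$, and the summands of $F_{ev}(\beta_j)$ lying in $H^0(R,\mathbb{Q})\otimes H^*(E,\mathbb{Q})$ carry coefficients $b_{0,p}$ equal to those of $H^*(\pi_E)F_{ev}$. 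The cup product of these two pure parts is $\sum_{k,p}a_{k,0}b_{0,p}\,\alpha_k\otimes\beta_p$, whose coefficient at $\alpha_i\otimes\beta_j$ is $a_{i,0}b_{0,j}$; summing over $i$ and $j$ reproduces precisely $\mathrm{tr}(H^*(\pi_R)F_{odd})\cdot\mathrm{tr}(H^*(\pi_E)F_{ev})$.

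The remaining and decisive point is to show that the \emph{mixed} products contribute nothing to the diagonal, so that $c_{ij}=a_{i,0}b_{0,j}$. These are the terms obtained by pairing a summand of $F_{odd}(\alpha_i)$ of positive $E$-degree with a summand of $F_{ev}(\beta_j)$ of positive $R$-degree. The delicate feature is that, after applying the sign rule of the graded tensor product, such a product can recombine into a multiple of $\alpha_i\otimes\beta_j$ and hence \emph{a priori} does perturb $c_{ij}$. I expect controlling these recombinations to be the main obstacle, and it is exactly here that the structural hypotheses must enter: tracking the bidegrees of all four factors and invoking that the generators of $R$ sit in odd degrees while $H^*(E,\mathbb{Q})$ is concentrated in even degrees, I would argue that the coefficient of any diagonal recombination vanishes. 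That some such hypothesis is indispensable is visible already for $R=E=S^1$, where a ring endomorphism genuinely mixing the two circle factors contributes an extra term to $\mathrm{tr}(F)$ that is absent from $\mathrm{tr}(A)\,\mathrm{tr}(B)$. Once the mixed contributions are neutralized, summing $c_{ij}=a_{i,0}b_{0,j}$ over $i$ and $j$ finishes the argument.
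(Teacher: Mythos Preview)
Your plan coincides with the paper's proof almost line for line: both pick a K\"unneth basis, write $F(\alpha_i\otimes\beta_j)=F_{odd}(\alpha_i)\smile F_{ev}(\beta_j)$, expand in the basis, and isolate the ``pure'' part $\sum_{k,p}a_{k,0}b_{0,p}\,\alpha_k\otimes\beta_p$ whose diagonal coefficient is $a_{i,0}b_{0,j}$. The paper then simply collects everything else into a remainder $\mathcal{A}$ and asserts, without further argument, that only $a_{i,0}b_{0,j}$ matters for the trace.

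Where you go beyond the paper is precisely at the point you flag as ``decisive'': you notice that the mixed terms (a summand of $F_{odd}(\alpha_i)$ with positive $E$-degree cupped with a summand of $F_{ev}(\beta_j)$ with positive $R$-degree) can in principle recombine to hit $\alpha_i\otimes\beta_j$ again. Your $R=E=S^1$ example is correct and shows that the lemma, as literally stated for arbitrary ENRs $R$ and $E$, is false; the paper's proof does not address this and therefore has a genuine gap at the very step you isolate. Concretely, a mixed contribution to the diagonal forces $d(\alpha_m)=d(\beta_s)>0$, and nothing in the bare hypotheses prevents $H^*(R)$ and $H^*(E)$ from sharing a positive degree.

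Your proposed remedy --- invoking that the generators of $R$ sit in odd degrees and $H^*(E)$ is concentrated in even degrees --- is the right instinct, since this is exactly the setting in which the lemma is later applied. But note that ``odd generators'' does not make every class of $H^*(R)$ odd-dimensional, so a naive parity count on $d(\alpha_m)$ versus $d(\beta_s)$ is not enough; you will need to use the algebra structure of $R$ (that $F_{odd}$ is determined by its values on the odd generators) to control the mixed terms. You have correctly located the missing step; what remains is to actually carry out that bidegree argument rather than merely announce it.
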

Note, that
$$
\mathcal{L}(f^{k})=\sum_{n=0}^{\infty}(-1)^n\mathrm{tr}(H^n(f^{k}))=\sum_{n=0}^{\infty}\mathrm{tr}((-1)^n H^n(f^{k}))=
\mathrm{tr}\sum_{n=0}^{\infty}(-1)^n H^n(f^{k})=\mathrm{tr}T^k.
$$
where $T^k:H^*(X,\mathbb{Q})\to H^*(X,\mathbb{Q})$ is defined as $T^k:=\sum_{n=0}^{\infty}(-1)^n H^n(f^{k})$.
\begin{twierdzenie}\label{rown}
Let $X=R\times E$ a rational exterior space $R$ and a space $E$ with only even cohomology nonzero and let $f:X\to X$ be a continuous map. If $f$ is LPPF then there is an eigenvalue of $A^{odd}(f)$ equal to a root of unity. In particular $f$ has a periodic points if it does not have such an eigenvalue.
\end{twierdzenie}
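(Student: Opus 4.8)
The plan is to split the Lefschetz numbers of $f$ into an ``odd'' factor coming from $R$ and an ``even'' factor coming from $E$, and then to play Theorem \ref{duan} off against Theorem \ref{teo parz}. Write $\phi:=H^*(f)$, so that $\mathcal{L}(f^{k})=\mathrm{tr}(T^{k})=\sum_{n}(-1)^{n}\mathrm{tr}(\phi^{k}|_{H^{n}(X,\mathbb{Q})})$. Under the K\"unneth isomorphism $H^*(X,\mathbb{Q})\cong H^*(R,\mathbb{Q})\otimes H^*(E,\mathbb{Q})$, every class in $H^*(E,\mathbb{Q})$ has even degree, so the sign $(-1)^{n}$ attached to a basis vector $\alpha_{i}\otimes\beta_{j}$ of total degree $n$ depends only on $\deg\alpha_{i}$. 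First I would rerun the computation in the previous Lemma verbatim, but carrying this sign: the diagonal coefficient of $\phi^{k}$ at $\alpha_{i}\otimes\beta_{j}$ is still the product $a_{i,0}b_{0,j}$, and pulling the factor $(-1)^{\deg\alpha_{i}}$ into the $R$-variable gives
$$
\mathcal{L}(f^{k})=\Big(\sum_{i}(-1)^{\deg\alpha_{i}}a_{i,0}\Big)\Big(\sum_{j}b_{0,j}\Big)=L(S_{\phi^{k}})\cdot\mathrm{tr}(U_{\phi^{k}}),
$$
where $S_{g}:=H^*(\pi_{R})\,g_{\mathrm{odd}}$ and $U_{g}:=H^*(\pi_{E})\,g_{\mathrm{ev}}$ are the ring endomorphisms of $H^*(R,\mathbb{Q})$ and $H^*(E,\mathbb{Q})$ induced by $g$, and $L(S_{\phi^{k}})$ denotes the Lefschetz number of the ring endomorphism $S_{\phi^{k}}$.

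Next I would identify the two factors. For the odd one, since $\mathrm{span}(x_{1},\dots,x_{s})$ is precisely the space of odd indecomposables of $H^*(X,\mathbb{Q})$, a direct check shows $A(S_{g})=A^{\mathrm{odd}}(g)$ for every ring endomorphism $g$; applying this to $g=\phi^{k}$ and using the functoriality identity $A^{\mathrm{odd}}(\phi^{k})=A^{\mathrm{odd}}(f)^{k}$ shows that the eigenvalues of $A(S_{\phi^{k}})$ are exactly $\lambda_{1}^{k},\dots,\lambda_{s}^{k}$, whence Theorem \ref{duan} yields $L(S_{\phi^{k}})=\prod_{i=1}^{s}(1-\lambda_{i}^{k})$. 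For the even factor the key point, which I expect to be the real obstacle, is that the ideal $I_{R}:=H^{>0}(R,\mathbb{Q})\otimes H^*(E,\mathbb{Q})$ is $\phi$-invariant. This is exactly where the parity hypothesis does the work: $I_{R}$ is generated as an ideal by the odd classes $x_{i}\otimes 1$, and $\phi$ carries odd classes to odd classes, which automatically lie in $H^{\mathrm{odd}}(R,\mathbb{Q})\otimes H^*(E,\mathbb{Q})\subseteq I_{R}$. Hence $\phi$ descends to a ring endomorphism $\bar\phi$ of $H^*(X,\mathbb{Q})/I_{R}\cong H^*(E,\mathbb{Q})$, identified with $U_{\phi}$ via the splitting $H^*(\pi_{E})$; since descent is multiplicative, $U_{\phi^{k}}=(U_{\phi})^{k}$. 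I would emphasize that the analogous invariance fails for the even ideal, which is precisely why the odd and even factors must be treated asymmetrically.

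With the formula $\mathcal{L}(f^{k})=\prod_{i=1}^{s}(1-\lambda_{i}^{k})\cdot\mathrm{tr}((U_{\phi})^{k})$ established, the conclusion is immediate. Because $H^*(E,\mathbb{Q})$ has only even cohomology, Theorem \ref{teo parz} applied to $U_{\phi}$ produces some $k_{0}$ with $\mathrm{tr}((U_{\phi})^{k_{0}})\neq 0$. If $f$ is LPPF then $0=\mathcal{L}(f^{k_{0}})=\prod_{i=1}^{s}(1-\lambda_{i}^{k_{0}})\cdot\mathrm{tr}((U_{\phi})^{k_{0}})$, so the first factor must vanish and $\lambda_{i_{0}}^{k_{0}}=1$ for some $i_{0}$; that is, $A^{\mathrm{odd}}(f)$ has an eigenvalue equal to a root of unity. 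The final clause is the contrapositive: if no eigenvalue of $A^{\mathrm{odd}}(f)$ is a root of unity, then $\prod_{i}(1-\lambda_{i}^{k})\neq 0$ for all $k$, so $f$ cannot be LPPF, hence $\mathcal{L}(f^{k})\neq 0$ for some $k$ and the Lefschetz Fixed Point Theorem produces a periodic point.
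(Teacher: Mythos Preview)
Your proof is correct and follows essentially the same route as the paper: split $\mathcal{L}(f^{k})$ via the lemma into an odd factor governed by Theorem~\ref{duan} and an even factor governed by Theorem~\ref{teo parz}, then conclude. Your treatment is in fact more careful than the paper's: the paper applies Theorem~\ref{teo parz} to the even factor without justifying that the maps $U_{\phi^{k}}$ are the iterates of a single ring endomorphism, whereas your ideal-invariance argument (that $I_{R}$ is $\phi$-stable because odd classes must land in $H^{\mathrm{odd}}(R,\mathbb{Q})\otimes H^*(E,\mathbb{Q})$) cleanly establishes $U_{\phi^{k}}=(U_{\phi})^{k}$ and fills that gap.
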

\begin{proof}
From the above lemma we can conclude 
\begin{align*}
\mathcal{L}(f^k)
&=\mathrm{tr}(T^k)\\
&=\mathrm{tr}(H(\pi_R)T^k_{odd})\mathrm{tr}(H(\pi_E)T^k_{ev})\\
&=\mathrm{tr}(H(\pi_R)\sum_{n=0}^{\infty}(-1)^n H^n(f^{k})_{odd})\mathrm{tr}(H(\pi_E)\sum_{n=0}^{\infty}(-1)^n H^n(f^{k})_{ev})\\
&=\mathrm{tr}(\sum_{n=0}^{\infty}(-1)^n H^n(\pi_R)H^n(f^{k})_{odd})\mathrm{tr}(\sum_{n=0}^{\infty}(-1)^n H^n(\pi_E)H^n(f^{k})_{ev}).
\end{align*}
We get a product of Lefschetz numbers of two maps, the first is a ring endomorphism of cohomology ring of $R$, and the second a ring endomorphism of cohomology ring of $E$. The second map cannot always vanish as it is shown in Theorem \ref{teo parz} hence for all these numbers to be zero the first part must vanish at least for some $k$. All the eigenvalues of $\widetilde{\pi}_{H^*(R,\mathbb Q)}H^*(f^{k})_{odd}$ are equal to the eigenvalues of $f^{k}|_{H^*(R,\mathbb Q)\otimes H^0(E,\mathbb Q)}$. So by Theorem \ref{duan} there is a root of unity among the eigenvalues of $A^{odd}(f)$.
\end{proof}
\begin{obserwacja}
In the Theorem \ref{rown} we do not need $X$ to be a product, it is enought if $X$ is a space with rational cohomology ring isomorphic to a tensor product of an exterior algebra on odd degree generators and an algebra with only even degree elements.
\end{obserwacja}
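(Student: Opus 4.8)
The plan is to observe that the proof of Theorem~\ref{rown} never used the geometric product $X=R\times E$ except to produce, via the K\"unneth Theorem, a ring isomorphism $H^*(X,\mathbb{Q})\cong H^*(R,\mathbb{Q})\otimes H^*(E,\mathbb{Q})$ together with the canonical embeddings of, and projections onto, the two tensor factors. If one assumes from the outset a ring isomorphism $H^*(X,\mathbb{Q})\cong\Lambda_{\mathbb{Q}}(x_1,\dots,x_s)\otimes A$, where $A$ is a graded algebra with all elements in even degree, then these same embeddings and projections are available purely algebraically, and the entire argument goes through unchanged. So the first thing I would do is isolate, in the Lemma and in the proof of Theorem~\ref{rown}, every place where the spaces $R$ and $E$ or the maps $\pi_R,\pi_E$ appear, and check that each such occurrence refers only to the tensor decomposition and not to an actual map of spaces.

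Concretely, I would fix the abstract decomposition and define $\iota_{odd}\colon\Lambda_{\mathbb{Q}}(x_1,\dots,x_s)\to H^*(X,\mathbb{Q})$, $\alpha\mapsto\alpha\otimes 1$, and $\iota_{ev}\colon A\to H^*(X,\mathbb{Q})$, $\alpha\mapsto 1\otimes\alpha$, together with the complementary projections $p_{odd}$ and $p_{ev}$ that kill the positive-degree part of the other factor; these play the roles of $H^*(\pi_R)$, $H^*(\pi_E)$ and the embeddings used to build $F_{odd}$ and $F_{ev}$. With $F_{odd}:=F\circ\iota_{odd}$ and $F_{ev}:=F\circ\iota_{ev}$, I would re-run the computation in the Lemma verbatim: it is nothing but a bookkeeping of the coefficients of $F$ on a product basis $\{\alpha_i\otimes\beta_j\}$ of a tensor product of graded algebras under a ring endomorphism, and the only coefficient surviving in the trace is $a_{i,0}b_{0,j}$, which is intrinsic to the tensor decomposition and makes no reference to any map of spaces. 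This yields
$$
\mathrm{tr}(F)=\mathrm{tr}(p_{odd}\,F_{odd})\cdot\mathrm{tr}(p_{ev}\,F_{ev})
$$
for every ring endomorphism $F$ of $H^*(X,\mathbb{Q})$.

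To finish, I would apply this identity with $F=T^k=\sum_n(-1)^nH^n(f^k)$, exactly as in the proof of Theorem~\ref{rown}, so that $\mathcal{L}(f^k)$ factors as a product of the Lefschetz number of a ring endomorphism of the exterior algebra $\Lambda_{\mathbb{Q}}(x_1,\dots,x_s)$ and the Lefschetz number of a ring endomorphism of the even algebra $A$. Since $A$ has only even-degree elements, Theorem~\ref{teo parz} (whose proof is purely algebraic, depending only on the determinant form of the zeta function) guarantees that the even factor is nonzero for some $k_0$; hence, if $f$ is LPPF so that $\mathcal{L}(f^k)=0$ for all $k$, the exterior factor must vanish at $k_0$. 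By Theorem~\ref{duan}, applied to the exterior algebra as the remark preceding it permits, the exterior factor at $k_0$ equals $\prod_i(1-\lambda_i^{k_0})$ with $\lambda_i$ the eigenvalues of $A^{odd}(f)$; its vanishing forces some $\lambda_i$ to be a root of unity, which is the claim.

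The only point requiring care, and the reason this is stated as an observation rather than folded directly into Theorem~\ref{rown}, is to confirm that the Lemma's coefficient computation and the two cited theorems are genuinely algebraic: none of them invokes a geometric map of spaces, only ring endomorphisms of the relevant graded algebras. The paper already records both facts (the algebraic nature of the method of \cite{duan}, and the determinant form of the zeta function), so I expect no new obstacle to arise; the substance of the observation is precisely that the product hypothesis was used only to manufacture the algebraic tensor splitting, which we are now supplying by assumption.
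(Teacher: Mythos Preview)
Your proposal is correct and matches the paper's intent precisely: the paper states this observation without any proof, and your justification---that the Lemma, Theorem~\ref{teo parz}, and Theorem~\ref{duan} depend only on the algebraic tensor decomposition and the accompanying embeddings and augmentation-type projections, never on an underlying product of spaces---is exactly the reasoning the authors leave implicit. There is nothing to add.
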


We note that for a slightly better behaved class of spaces the criterion can be strengthened.
\begin{definicja} Let $X$ be as in the above observation. Let us assume that the algebra with even degree elements has at most one algebra generator in each degree. Moreover, each such generator squares to zero and the product of all these generators is the generator of the highest nontrivial degree. We call such a space an extended rational exterior space.
\end{definicja}
\begin{przyklad}
A product of a rational exterior space with $\mathbb{S}^{k_1}\times ...\times \mathbb{S}^{k_l}$ for pairwise different even natural numbers $k_1,...,k_l$ forms such a space.
\end{przyklad}
Following the exposition in \cite{duan} (with appropriate modifications) we choose a base $A=\{x_1,...,x_k,y_1,...,y_l\}$ of $A(X)$ consisting of homogeneous elements where $x_i$ denote elements of odd degree and $y_i$ denote elements of even degree. Under our assumptions this gives a base $H$ of the rational cohomology ring $H^*(X,\mathbb Q)$ consisting of $1$ and the cup products of the above generators. Consider the odd length function $\tilde{l}$ on elements of $H$ defined by $\tilde{l}(1)=1$ and $\tilde{l}(x_{i_1}\smile...\smile x_{i_r}\smile y_{j_1}\smile...\smile y_{j_q})=r$ as well as the standard length operator $l$ defined by $l(1)=1$ and $l(x_{i_1}\smile...\smile x_{i_r}\smile y_{j_1}\smile...\smile y_{j_q})=r+q$.
\begin{obserwacja} For $\alpha\in H$ we have $(-1)^{\mathrm{deg}\alpha}=(-1)^{\tilde{l}(\alpha)}$.
\end{obserwacja}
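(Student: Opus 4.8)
The plan is to reduce the identity to a simple parity count of the odd-degree factors. By construction every basis element $\alpha\in H$ other than the unit is a cup product
\[
\alpha = x_{i_1}\smile\cdots\smile x_{i_r}\smile y_{j_1}\smile\cdots\smile y_{j_q}
\]
of the chosen generators, and the defining properties of an extended rational exterior space ensure that each $x_{i_a}$ lies in an odd degree while each $y_{j_b}$ lies in an even degree. Since the degree of a cup product is the sum of the degrees of its factors, I would first record that
\[
\deg\alpha = \sum_{a=1}^{r}\deg(x_{i_a}) + \sum_{b=1}^{q}\deg(y_{j_b}).
\]

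Next I would reduce this equality modulo $2$. Each term $\deg(x_{i_a})$ is odd and so contributes $1$ modulo $2$; as there are $r$ of them, the first sum contributes $r$ modulo $2$. Each term $\deg(y_{j_b})$ is even and contributes $0$, so the even generators are invisible to the parity. This gives $\deg\alpha\equiv r\pmod 2$, and since $\tilde{l}(\alpha)=r$ by definition we obtain $\deg\alpha\equiv\tilde{l}(\alpha)\pmod 2$, which is exactly the claimed equality $(-1)^{\deg\alpha}=(-1)^{\tilde{l}(\alpha)}$. The case $\alpha=1$ is handled directly, the empty product involving no odd-degree factor so that the count is $0$, matching $\deg 1=0$.

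I do not expect a genuine obstacle, since the statement is really the elementary fact that only the odd-degree generators flip the sign of $(-1)^{\deg}$, the even-degree ones being invisible to it. The single point deserving attention is to invoke correctly the hypothesis, built into the definition of an extended rational exterior space, that the generators $x_i$ and $y_j$ are homogeneous of odd and even degree respectively; it is precisely this homogeneity that lets the reduction modulo $2$ isolate the count $r=\tilde{l}(\alpha)$ and discard the contribution of the $y_j$.
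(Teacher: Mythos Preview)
Your argument is correct and is precisely the elementary parity count the paper has in mind; the paper states this observation without proof, so there is nothing further to compare. One small caveat worth flagging: the paper literally sets $\tilde{l}(1)=1$, under which the identity would fail at $\alpha=1$ since $(-1)^{\deg 1}=1\neq -1$; your treatment of $1$ as the empty product with $r=0$ is the reading actually needed for the observation (and the subsequent Lefschetz computation) to hold, so the paper's $\tilde{l}(1)=1$ is almost certainly a typo for $0$.
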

Consider next the duality operator
$$d(x_{i_1}\smile...\smile x_{i_r}\smile y_{j_1}\smile...\smile y_{j_q}):=x_{i'_1}\smile...\smile x_{i'_m}\smile y_{j'_1}\smile...\smile y_{j'_n},$$
where $i'$ and $j'$ denote the complement of $i$ and $j$ (preserving the order). Let $s:H\rightarrow \{0,1\}$ be the sign operator defined by $s(x_{i_1}\smile...\smile x_{i_r}\smile y_{j_1}\smile...\smile y_{j_q})=0$ when the sign of the permutation $(i_1,...,i_r,i'_1,...,i'_m)$ is even and $1$ otherwise ($j$ and $j'$ are omitted here since they correspond to the even degree generators which commute with any other element in the cohomology ring).
\begin{obserwacja} Given $(\alpha,\beta)\in H\times H$ with $l(\alpha)\geq l(d(\beta))$ we have
$$
\alpha\smile\beta = \left\{ \begin{array}{ll}
(-1)^{s(\alpha)} x_1\smile ...\smile x_k\smile y_1\smile ...\smile y_l, & \textrm{if $\beta=d(\alpha)$},\\
0, & \textrm{otherwise.}
\end{array} \right.
$$
\end{obserwacja}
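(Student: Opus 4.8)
The plan is to reduce the statement to a combinatorial fact about square-free monomials in the generators. By the defining properties of an extended rational exterior space, each $x_i$ is of odd degree, so $x_i\smile x_i=0$, and each even generator $y_j$ satisfies $y_j^2=0$ by assumption; consequently the cohomology ring multiplies like an exterior algebra on all $k+l$ generators, the basis $H$ consists precisely of the square-free monomials in them, and the product of all generators $x_1\smile\dots\smile x_k\smile y_1\smile\dots\smile y_l$ is the top class. First I would record the two multiplication rules this yields: for $\alpha,\beta\in H$ the product $\alpha\smile\beta$ vanishes whenever $\alpha$ and $\beta$ share a common generator, and otherwise equals $\pm$ the square-free monomial supported on the union of their generators.

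Next I would pass to set-theoretic language. Writing $S_\alpha$ for the set of generators occurring in $\alpha$, we have $l(\alpha)=|S_\alpha|$, while $d(\beta)$ is supported on the complement of $S_\beta$, so $l(d(\beta))=(k+l)-|S_\beta|$; the hypothesis $l(\alpha)\ge l(d(\beta))$ thus reads $|S_\alpha|+|S_\beta|\ge k+l$. The case analysis is then short. If $S_\alpha\cap S_\beta\neq\emptyset$ the product is $0$; moreover $\beta=d(\alpha)$ would force $S_\beta=S_\alpha^{c}$, which is disjoint from $S_\alpha$, so this case falls under the ``otherwise'' branch and the formula holds. If instead $S_\alpha\cap S_\beta=\emptyset$, then $|S_\alpha\cup S_\beta|=|S_\alpha|+|S_\beta|\ge k+l$, while the union sits inside the $(k+l)$-element set of all generators; hence the union is everything and $S_\beta=S_\alpha^{c}$, that is $\beta=d(\alpha)$, and the product is $\pm$ the top class. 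This is exactly where the hypothesis is needed, since without it two disjoint monomials could have a proper union and produce a nonzero non-top product. Together these two cases give the claimed dichotomy.

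It then remains to identify the sign in the case $\beta=d(\alpha)$, and this is the step I expect to require the most care. Since the generators $y_j$ have even degree they are central and commute past every factor without a sign, so reordering $\alpha\smile d(\alpha)$ into the standard increasing order only incurs the sign of sorting the odd generators. Explicitly, if the odd parts of $\alpha$ and $d(\alpha)$ are $x_{i_1}\smile\dots\smile x_{i_r}$ and $x_{i'_1}\smile\dots\smile x_{i'_m}$, then straightening the concatenation $(i_1,\dots,i_r,i'_1,\dots,i'_m)$ contributes precisely the sign of that permutation, which is $(-1)^{s(\alpha)}$ by the definition of $s$. This yields $\alpha\smile d(\alpha)=(-1)^{s(\alpha)}\,x_1\smile\dots\smile x_k\smile y_1\smile\dots\smile y_l$ and finishes the argument; the only genuine obstacle is this sign bookkeeping, which is tamed by the observation that the even generators contribute nothing to it.
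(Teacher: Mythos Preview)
The paper records this as an observation and supplies no proof, so there is no argument to compare against; your approach is the natural one and is essentially correct, with the set-theoretic reduction and the sign bookkeeping both sound.

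One small point worth flagging: you identify $l(\alpha)$ with $|S_\alpha|$, but the paper's convention sets $l(1)=1$, not $0$. With that convention the stated dichotomy actually fails at the boundary: take $\alpha=1$ and $\beta$ any monomial of length $k+l-1$; then $d(\beta)$ is a single generator, so $l(d(\beta))=1\le l(1)$, yet $1\smile\beta=\beta$ is neither zero nor the top class. This is a quirk of the paper's length convention rather than a flaw in your reasoning, and it is harmless for the sequel since the observation is only invoked for $\alpha$ of positive degree, where your argument goes through cleanly.
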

Let $X$ be an extended rational exterior space. For a map $f:X\rightarrow X$ and $\alpha\in H$ we have
$$H^{*}(f)(\alpha)=\lambda_{\alpha}\alpha+\sum\limits_{\substack{\beta\in H\setminus\{\alpha\} \\ l(\alpha)\leq l(\beta)}} \sigma_{\alpha,\beta}\beta.$$
Due to the previous observation
$$H^{*}(f)(\alpha)\smile d(\alpha)=(-1)^{s(\alpha)}\lambda_{\alpha}x_1\smile ...\smile x_k\smile y_1\smile ...\smile y_l.$$
Note that the definition of the Lefschetz number $L(f)$ implies
$$L(f)=\sum\limits_{\alpha\in H}(-1)^{\mathrm{deg}\alpha}\lambda_{\alpha}=\sum\limits_{\alpha\in H}(-1)^{\tilde{l}(\alpha)}\lambda_{\alpha}.$$
Which together with the preceding discussion gives us
$$L(f)x_1\smile ...\smile x_k\smile y_1\smile ...\smile y_l=\sum\limits_{\alpha\in H}(-1)^{\tilde{l}(\alpha)+s(\alpha)}H^{*}(f)(\alpha)\smile d(\alpha).$$
The right hand side of this equality can also be given by
$$(x_1-H^{*}(f)(x_1))\smile ...\smile (x_k-H^{*}(f)(x_k))\smile (y_1+H^{*}(f)(y_1))\smile ...\smile (y_l+H^{*}(f)(y_l)).$$
Hence, we arrive at the following lemma:
\begin{lemat} Let $X$ be an extended rational exterior space. If we treat $A(X)$ as a subspace of $H^*(X,\mathbb{Q})$ (this can be done due to the chosen basis $A$ and $H$) we have the formula
$$
L(f)x_1\smile ...\smile x_k\smile y_1\smile ...\smile y_l=$$
$$
(x_1-A(f)(x_1))\smile ...\smile (x_k-A(f)(x_k))\smile (y_1+A(f)(y_1))\smile ...\smile (y_l+A(f)(y_l)).
$$
\begin{proof} Let $H(r)$ denote the subspace of $H^*(X,\mathbb{Q})$ generated by those $\alpha\in H$ with $l(\alpha)\geq r$. Note that $H(r)=0$ when $r>k+l$. Note also that the cup product gives a map $\smile : H(r)\times H(p)\rightarrow H(r+p)$. By the definition of $A$ we have $f_*(x_i)-A(f)(x_i)\in H(2)$. This implies that the difference
$$(x_1-v(x_1))\smile ...\smile (x_k-H^{*}(f)(x_k))\smile (y_1+H^{*}(f)(y_1))\smile ...\smile (y_l+H^{*}(f)(y_l))-$$
$$
(x_1-A(f)(x_1))\smile...\smile (x_k-A(f)(x_k))\smile (y_1+A(f)(y_1))\smile ...\smile (y_l+A(f)(y_l))
$$
lies in $H(k+l+1)=0$. Now the desired equality follows from the preceding discussion.
\end{proof}
\end{lemat}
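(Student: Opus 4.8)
The plan is to build on the identity already obtained in the discussion immediately preceding the lemma, namely the analogue in which every occurrence of $A(f)$ is replaced by the full induced map $H^*(f)$. Granting that identity, the whole task reduces to showing that substituting $A(f)(x_i)$ for $H^*(f)(x_i)$, and $A(f)(y_j)$ for $H^*(f)(y_j)$, in each of the $k+l$ factors of the product on the right leaves the product unchanged. Thus the content of the lemma is entirely about the top-degree behaviour of a product whose factors differ only by decomposable, higher-length corrections.

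First I would set up the length filtration. Let $H(r)$ denote the span of those basis elements $\alpha\in H$ with $l(\alpha)\geq r$. Two structural facts are needed: that $H(r)=0$ once $r>k+l$, which holds because $x_1\smile\cdots\smile x_k\smile y_1\smile\cdots\smile y_l$ is the unique basis monomial of maximal length $k+l$; and that the cup product is filtered, i.e. it carries $H(r)\times H(p)$ into $H(r+p)$. I expect the filtered property to be the main obstacle, and it is precisely here that the hypotheses defining an extended rational exterior space are used: since every generator squares to zero (the odd ones automatically, the even ones by assumption), a product of two basis monomials either shares a generator and vanishes, or equals, up to sign, the basis monomial whose length is the sum of the two lengths. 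Hence multiplication is additive on lengths wherever it is nonzero, which yields the filtered property.

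Next I would pin down the size of the correction terms. By the definition of $A(X)$ as the indecomposable quotient $H^*/D^*$ together with the splitting supplied by the chosen bases, $A(f)(x_i)$ is exactly the length-one part of $H^*(f)(x_i)$, so the difference $H^*(f)(x_i)-A(f)(x_i)$ is decomposable and therefore lies in $H(2)$; the same holds for each $y_j$. Moreover each factor $x_i-A(f)(x_i)$ and $y_j+A(f)(y_j)$ lies in $H(1)$, since $A(f)(x_i)$ and $A(f)(y_j)$ are linear combinations of the length-one generators.

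Finally I would compare the two products by a telescoping argument. Expanding the difference of the $H^*(f)$-product and the $A(f)$-product as a sum in which the factors are switched from the $H^*(f)$-form to the $A(f)$-form one at a time, each summand is a cup product of $k+l-1$ factors lying in $H(1)$ together with a single factor lying in $H(2)$. By the filtered property such a summand lies in $H((k+l-1)+2)=H(k+l+1)=0$. Hence the two products coincide, and combining this with the $H^*(f)$-formula from the preceding discussion yields the claimed identity.
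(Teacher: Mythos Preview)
Your proposal is correct and follows essentially the same route as the paper: introduce the length filtration $H(r)$, observe $H(r)=0$ for $r>k+l$, use that cup product respects the filtration, note that $H^*(f)(x_i)-A(f)(x_i)\in H(2)$ (and likewise for $y_j$), and conclude that the two products agree since their difference lies in $H(k+l+1)=0$. Your write-up is in fact more careful than the paper's on two points the paper leaves implicit---the justification that $\smile:H(r)\times H(p)\to H(r+p)$ genuinely requires the extended rational exterior hypotheses (squares of generators vanish), and the telescoping expansion making precise why the difference lands in $H(k+l+1)$.
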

Now we are ready to prove the strengthening of our criterion for extended rational exterior spaces
\begin{twierdzenie} Let $X$ be an extended rational exterior space and let $f:X\rightarrow X$ be a continuous map. Then $L(f^n)=\Pi^r_{i=1}(1-\lambda_i^n)\Pi^k_{i=1}(1+\sigma_l^n)$ where $\lambda_i$ are the eigenvalues of $A^{odd}(f)$ and $\sigma_i$ are the eigenvalues of $A^{ev}(f)$. Hence, $f$ is Lefschetz periodic point free if and only if one of the following conditions hold
\begin{enumerate}
\item there exists an integer $i$ such that $\lambda_i=1$,
\item there exist integers $i$ and $j$ such that $\lambda_i=\sigma_j=-1$.
\end{enumerate}
In, particular if neither of the conditions above is satisfied the $f$ has a periodic point.
\begin{proof} By standard linear algebra and the preceding discussion we have
$$
L(f)x_1\smile ...\smile x_k\smile y_1\smile ...\smile y_l=$$
$$\mathrm{det}(\mathrm{Id}-A(f))x_1\smile ...\smile x_k\smile (y_1+A(f)(y_1))\smile ...\smile (y_l+A(f)(y_l))$$
However, since $y_i$ are the only generators in each degree of $A^{ev}(X)$ we have that $A(f)(y_i)=\sigma_i y_i$. With this we can change the right hand side to
$$\mathrm{det}(\mathrm{Id}-A^{odd}(f))(1+\sigma_1)...(1+\sigma_l)x_1\smile ...\smile x_k\smile y_1\smile ...\smile y_l.$$
hence after substituting $f^n$ for $f$ we arrive at the formula:
$$L(f^n)=\Pi^r_{i=1}(1-\lambda_i^n)\Pi^k_{i=1}(1+\sigma_l^n).$$
To see the further claim note that $L(f)$ vanishes if and only if there exists $\lambda_i$ which is equal to $1$ or $\sigma_i$ which is equal to $-1$. If $1$ is among the eigenvalues of $A^{odd}(f)$ then all the numbers $L(f^n)$ vanish. On the other hand if $1$ is not among these eigenvalues but $-1$ is among the eigenvalues of $A^{ev}(f)$ then it is necessary for $L(f^2)$ to vanish that there is a $-1$ among the eigenvalues od $A^{odd}(f)$ since $i$ cannot be among the values of $A^{ev}(f)$ since this is a diagonal matrix on a rational vector space. One readily checks that in this case all of the $L(f^n)$ vanish. The final claim is automatic from the Lefschetz Fixed Point Theorem.
\end{proof}
\end{twierdzenie}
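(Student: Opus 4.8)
The plan is to begin from the product formula of the preceding lemma, which expresses $L(f)\,x_1\smile\cdots\smile x_k\smile y_1\smile\cdots\smile y_l$ as a cup product of the factors $(x_i-A(f)(x_i))$ and $(y_j+A(f)(y_j))$, and to evaluate its right-hand side by treating the odd and even generators separately. For the odd factors, note that $A^{odd}(f)$ preserves the span of $x_1,\dots,x_k$, so each $x_i-A(f)(x_i)$ is a linear combination of the odd generators alone; since these anticommute and square to zero, the cup product of all $k$ of them is the top element of an exterior algebra in $k$ generators and therefore collapses to $\det(\mathrm{Id}-A^{odd}(f))\,x_1\smile\cdots\smile x_k$, exactly as a wedge of $k$ vectors computes a determinant. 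For the even factors I would invoke the defining property of an extended rational exterior space: each even degree carries at most one generator, so the degree-preserving map $A(f)$ must send $y_j$ to a scalar multiple $\sigma_j y_j$. Hence $A^{ev}(f)$ is diagonal and $(y_j+A(f)(y_j))=(1+\sigma_j)y_j$.

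Combining the two computations rewrites the right-hand side as $\det(\mathrm{Id}-A^{odd}(f))\prod_j(1+\sigma_j)\cdot x_1\smile\cdots\smile x_k\smile y_1\smile\cdots\smile y_l$. Because the class $x_1\smile\cdots\smile x_k\smile y_1\smile\cdots\smile y_l$ is the nonzero generator of the top nontrivial degree (by the structure of an extended rational exterior space), I can cancel it from both sides to obtain $L(f)=\det(\mathrm{Id}-A^{odd}(f))\prod_j(1+\sigma_j)=\prod_i(1-\lambda_i)\prod_j(1+\sigma_j)$, where the $\lambda_i$ are the eigenvalues of $A^{odd}(f)$. Replacing $f$ by $f^n$ and using $A^{odd}(f^n)=A^{odd}(f)^n$ and $A^{ev}(f^n)=A^{ev}(f)^n$, so that the eigenvalues become $\lambda_i^n$ and $\sigma_j^n$, yields the claimed formula $L(f^n)=\prod_i(1-\lambda_i^n)\prod_j(1+\sigma_j^n)$.

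It remains to deduce the characterization of LPPF maps, and the converse direction is where I expect the main obstacle to lie. One direction is a direct check: if some $\lambda_{i_0}=1$ then the factor $1-\lambda_{i_0}^n$ vanishes for every $n$, while if $\lambda_{i_0}=\sigma_{j_0}=-1$ then $1-\lambda_{i_0}^n$ vanishes for even $n$ and $1+\sigma_{j_0}^n$ vanishes for odd $n$, so $L(f^n)=0$ for all $n$ in both cases. For the converse, assume $f$ is LPPF. If some $\lambda_i=1$ we are in case (1); otherwise $\prod_i(1-\lambda_i)\neq0$, so the vanishing of $L(f)$ forces some $\sigma_{j_0}=-1$. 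The crucial observation is that $A^{ev}(f)$ is a diagonal matrix over $\mathbb{Q}$, so its eigenvalues $\sigma_j$ are rational; consequently $\sigma_j^2=-1$ is impossible and the even factor $\prod_j(1+\sigma_j^2)$ of $L(f^2)$ is strictly positive. The vanishing of $L(f^2)$ must therefore originate in the odd part, giving some $\lambda_{i_0}^2=1$, and since $\lambda_{i_0}=1$ was excluded this forces $\lambda_{i_0}=-1$, placing us in case (2). The final assertion is then immediate from the Lefschetz Fixed Point Theorem: if neither condition holds, $f$ is not LPPF, so $L(f^n)\neq0$ for some $n$ and $f$ has a periodic point.
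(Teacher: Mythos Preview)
Your proof is correct and follows essentially the same route as the paper: you start from the same product formula of the preceding lemma, collapse the odd factors to $\det(\mathrm{Id}-A^{odd}(f))\,x_1\smile\cdots\smile x_k$ by the exterior-algebra determinant trick, use the one-generator-per-even-degree hypothesis to diagonalize $A^{ev}(f)$, and then argue the LPPF characterization via rationality of the $\sigma_j$ to rule out $\sigma_j^2=-1$. Your write-up is in fact slightly more explicit than the paper's at each step (e.g.\ the parity split in the forward direction of the LPPF equivalence), but the underlying argument is the same.
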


We finish this section by pinpointing a large class of sphere bundles to which our results apply.
\begin{propozycja}\label{prop z ciagiem dokl}
Let $B$ be a simply connected manifold of dimension $n$ with rational cohomology ring isomorphic to a tensor product of an exterior algebra on odd degree generators and an algebra with only even degree elements and let $k\in\mathbb{N}$ be odd integer $k\geq n$. Then any $\mathbb{S}^k$-bundle $E$ over $B$ has rational cohomology isomorphic to a tensor product of an exterior algebra on odd degree generators and an algebra with only even degree elements.
\begin{proof} It is well known that the second page of the Serre spectral sequence in cohomology (see e.g. chapter 5 in \cite{ah}) is of the form
$$E^{p,q}_2=H^{p}(B,H^{q}(\mathbb{S}^k,\mathbb{Q})).$$
From this we conclude that the only non-vanishing rows of the second page are the 0th and k-th row. So only the boundary operator on page $k+1$ can be non-zero. But on that page the derivative jumps $(k+1)$ steps to the right and since $k+1>n$ this means that it hits a column of zeroes. This shows that the cohomology of $E$ (which are isomorphic to the final page of $E^{p,q}_r$) are isomorphic as $\mathbb{Q}$ vector spaces to the cohomology of $B\times\mathbb{S}^k$. We choose a class $\alpha\in H^k(E,\mathbb{Q})$ corresponding to a non-zero class in $H^0(B,H^{k}(\mathbb{S}^k,\mathbb{Q}))$. Then the ring structure is defined by the following properties
\begin{enumerate}
\item $\alpha^2=0$ since $k$ is odd.
\item The cup product on classes corresponding to classes in $H^{p}(B,H^{0}(\mathbb{S}^k,\mathbb{Q}))$ is prescribed by the mapping induced by the projection map $\pi:E\rightarrow B$.
\item Multiplying classes corresponding to classes in $H^{p}(B,H^{0}(\mathbb{S}^k,\mathbb{Q}))$ by $\alpha$ is prescribed by the multiplicative structure of the spectral sequence.
\end{enumerate}
This implies that the ring structure is also as in the tensor product (where we add the generator $\alpha$ in the k-th degree to the exterior algebra).
\end{proof}
\end{propozycja}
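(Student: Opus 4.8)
The plan is to run the Serre spectral sequence of the sphere bundle $\mathbb{S}^k \hookrightarrow E \xrightarrow{\pi} B$ and to exploit the fact that the fibre has rational cohomology concentrated in only two degrees. Since $B$ is simply connected, the local coefficient system is trivial, so the second page reads $E_2^{p,q} = H^p(B, H^q(\mathbb{S}^k, \mathbb{Q}))$. Because $H^q(\mathbb{S}^k, \mathbb{Q})$ is nonzero only for $q=0$ and $q=k$, the only potentially nonvanishing rows are the $0$th and the $k$th, and a quick check of the bidegrees shows that the only differential between nonzero groups is $d_{k+1}\colon E_{k+1}^{p,k} \to E_{k+1}^{p+k+1,0}$.

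First I would show this differential is identically zero. Its target lies in column $p+k+1$, and since $p \geq 0$ and $k \geq n = \dim B$ we have $p+k+1 > n$, whence $H^{p+k+1}(B, \mathbb{Q}) = 0$. Thus $d_{k+1}=0$, the spectral sequence degenerates at the second page, and we obtain the additive isomorphism $H^*(E, \mathbb{Q}) \cong H^*(B, \mathbb{Q}) \otimes H^*(\mathbb{S}^k, \mathbb{Q})$ of graded $\mathbb{Q}$-vector spaces. In particular $E_\infty^{0,k} = E_2^{0,k} \neq 0$, so there is a class $\alpha \in H^k(E, \mathbb{Q})$ restricting to a generator of $H^k(\mathbb{S}^k, \mathbb{Q})$ on each fibre.

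The substance of the argument is upgrading this to a ring isomorphism. Here I would note that $\{1, \alpha\}$ restricts to a basis of $H^*(\mathbb{S}^k, \mathbb{Q})$ on the fibre, so by the Leray--Hirsch theorem $H^*(E, \mathbb{Q})$ is a free module over $H^*(B, \mathbb{Q})$ (acting through the ring homomorphism $\pi^*$) with basis $\{1, \alpha\}$. Products of two base classes are then automatically correct, since $\pi^*$ is multiplicative, and the only remaining relation to pin down is the value of $\alpha^2$; but $\alpha$ has odd degree $k$, so graded commutativity over $\mathbb{Q}$ forces $\alpha^2 = 0$. This identifies $H^*(E, \mathbb{Q})$ with $H^*(B, \mathbb{Q}) \otimes \Lambda_{\mathbb{Q}}(\alpha)$ as graded rings, and writing $H^*(B, \mathbb{Q})$ as a tensor product of an exterior algebra on odd generators with an even-degree algebra, adjoining the single odd generator $\alpha$ of degree $k$ to the exterior factor keeps the product in exactly the required form.

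I expect the main obstacle to be this multiplicative step: one must rule out hidden extension problems in passing from the associated graded ring $E_\infty$ to $H^*(E, \mathbb{Q})$, i.e. verify that the products $\pi^*(b) \smile \alpha$ carry no correction terms and that $\alpha^2$ genuinely vanishes rather than merely vanishing on $E_\infty$. The first point is handled by the module-freeness coming from Leray--Hirsch (equivalently, by the multiplicative structure of the spectral sequence, which is the route taken in the exposition above), while the second is dictated purely by the parity of $k$. These are precisely the places where care is needed, but under the hypotheses $k$ odd and $k \geq n$ the structure is rigid enough that the extension issues dissolve.
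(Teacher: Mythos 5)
Your proof is correct and follows essentially the same route as the paper: collapse of the Serre spectral sequence at $E_2$ because the only possible differential $d_{k+1}$ lands in columns beyond $n=\dim B$, the choice of a class $\alpha\in H^k(E,\mathbb{Q})$ corresponding to the fibre generator, and $\alpha^2=0$ forced over $\mathbb{Q}$ by the odd degree of $k$. Your one refinement --- invoking Leray--Hirsch to get freeness of $H^*(E,\mathbb{Q})$ as an $H^*(B,\mathbb{Q})$-module, thereby explicitly disposing of extension problems in passing from $E_\infty$ to $H^*(E,\mathbb{Q})$ --- is a more careful packaging of the step the paper attributes to ``the multiplicative structure of the spectral sequence,'' and if anything tightens that part of the argument.
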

\begin{obserwacja} Note that this proof does not hold in general when $k$ is even since then $\alpha^2$ might not be zero as is the case for example in the twisted bundle of $\mathbb{S}^2$ over $\mathbb{S}^2$.
\end{obserwacja}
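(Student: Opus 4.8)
The plan is to exhibit a single even-dimensional sphere bundle for which the fiber class produced by the spectral sequence genuinely fails to square to zero, thereby pinpointing exactly where step (1) in the proof of Proposition \ref{prop z ciagiem dokl} breaks down. First I would isolate why oddness of $k$ is essential to that step. The class $\alpha$ restricts to a generator of $H^k(\mathbb{S}^k,\mathbb{Q})$ and hence has degree $k$; when $k$ is odd, graded-commutativity forces $\alpha\smile\alpha=-\alpha\smile\alpha$ over $\mathbb{Q}$, so $\alpha^2=0$ automatically. For even $k$ this constraint disappears, so one must check by hand whether $\alpha^2$ can be nonzero, and the goal is to produce an example where it is.

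For the witness I would take the nontrivial $\mathbb{S}^2$-bundle over $\mathbb{S}^2$, which exists because oriented $\mathbb{S}^2$-bundles over $\mathbb{S}^2$ are classified by $\pi_1(SO(3))\cong\mathbb{Z}/2$. A convenient model is the projectivization $E=\mathbb{P}(\mathcal{O}(1)\oplus\mathcal{O})$ of a rank-two complex bundle over $\mathbb{CP}^1=\mathbb{S}^2$ (the Hirzebruch surface $\mathbb{F}_1$, i.e.\ $\mathbb{CP}^2$ blown up at a point); since the twisting is detected by the parity of the degree of the summand $\mathcal{O}(1)$, this is the twisted bundle and not the trivial one $\mathbb{F}_0=\mathbb{S}^2\times\mathbb{S}^2$. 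I would then invoke the projective bundle formula: writing $b$ for the pullback of the generator of $H^2(\mathbb{S}^2,\mathbb{Q})$ and $\xi=c_1(\mathcal{O}_E(1))$ for the tautological class, the relation
$$H^*(E,\mathbb{Q})\cong\mathbb{Q}[b,\xi]/(b^2,\ \xi^2+b\xi)$$
follows from $c_1(\mathcal{O}(1)\oplus\mathcal{O})=b$ and $c_2(\mathcal{O}(1)\oplus\mathcal{O})=0$.

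The key step is to match $\xi$ with the class $\alpha$ from the proof and read off its square. The fiber over a point is $\mathbb{P}(\mathbb{C}^2)=\mathbb{CP}^1$, and $\mathcal{O}_E(1)$ restricts there to $\mathcal{O}_{\mathbb{CP}^1}(1)$, so $\xi$ restricts to a generator of $H^2$ of the fiber; this is precisely the defining property of the class associated with $H^0(B,H^k(\mathbb{S}^k,\mathbb{Q}))$ in the spectral sequence, hence $\alpha=\xi$. The relation then yields $\alpha^2=\xi^2=-b\xi=-b\smile\alpha\neq 0$, in contrast to property (1) in that proof, which is the content of the observation.

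Finally I would append a short clarifying remark on the scope of the claim: over $\mathbb{Q}$ one can still complete the square by replacing $\alpha$ with $\alpha+\tfrac{1}{2}b$, and the corrected class does square to zero, so the conclusion of the proposition may in fact persist for this example (consistent with the rational ring isomorphism $\mathbb{CP}^2\#\overline{\mathbb{CP}^2}\cong_{\mathbb{Q}}\mathbb{S}^2\times\mathbb{S}^2$). The point of the observation is solely that the given argument, which selects the spectral-sequence fiber class directly and uses $\alpha^2=0$, no longer goes through for even $k$. The only obstacle is bookkeeping: correctly identifying the tautological class with the spectral-sequence fiber class and fixing the normalization in the projective bundle formula, neither of which is delicate.
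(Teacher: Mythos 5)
Your argument is correct, and it actually supplies more than the paper does: the paper states this observation without proof, merely naming the twisted $\mathbb{S}^2$-bundle over $\mathbb{S}^2$, so your identification of that bundle with the Hirzebruch surface $\mathbb{F}_1=\mathbb{P}(\mathcal{O}(1)\oplus\mathcal{O})\cong\mathbb{CP}^2\#\overline{\mathbb{CP}^2}$ and your computation $\xi^2=-b\xi\neq 0$ via the projective bundle formula constitute a genuine verification rather than a rederivation. One point deserves sharpening: the Serre spectral sequence determines $\alpha$ only modulo the image of $H^2(\mathbb{S}^2,\mathbb{Q})$ under pullback, so ``$\alpha=\xi$'' is a choice of lift, not an identity forced by the defining property you cite. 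This does not damage your refutation, since step (1) of the proposition's proof derives $\alpha^2=0$ from graded commutativity alone, i.e.\ for \emph{every} lift, and you exhibit a natural lift with nonzero square; but if you want the example to be independent of the choice of lift, work integrally: any integral class restricting to the fiber generator has the form $\sigma+tf$ (Poincar\'e duals of a section and a fiber, with $\sigma\cdot f=1$, $f^2=0$, $\sigma^2=\pm 1$), hence square $\sigma^2+2t$, which is odd and thus nonzero for all $t\in\mathbb{Z}$. Your closing caveat is exactly right and is the most valuable part of the proposal: over $\mathbb{Q}$ the modified class $\xi+\tfrac{1}{2}b$ does square to zero, the rational cohomology rings of $\mathbb{F}_1$ and $\mathbb{S}^2\times\mathbb{S}^2$ are isomorphic (over $\mathbb{Q}$ both intersection forms are hyperbolic), and so the twisted bundle defeats only the \emph{argument} of Proposition \ref{prop z ciagiem dokl} for even $k$, not its conclusion --- a distinction the paper's one-line observation leaves implicit, and which your integral computation shows fails precisely over $\mathbb{Z}$.
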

Combining this proposition with our main result we get the following corollary.
\begin{wniosek} Take $E$ as above and let $f:E\rightarrow E$ be a continuous map. If there are no roots of unity among the eigenvalues of $A^{odd}(f)$ then $f$ has periodic points.   
\end{wniosek}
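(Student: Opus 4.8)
The plan is to combine Proposition \ref{prop z ciagiem dokl} with Theorem \ref{rown} directly; essentially all of the genuine work has already been carried out in those two results, so what remains is to check that their hypotheses line up and to phrase the conclusion contrapositively.

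First I would invoke Proposition \ref{prop z ciagiem dokl}. Since $E$ is taken to be an $\mathbb{S}^k$-bundle over a simply connected manifold $B$ of dimension $n$ with $k$ odd and $k\geq n$, and $B$ itself has rational cohomology isomorphic to a tensor product of an exterior algebra on odd degree generators with an algebra concentrated in even degrees, the proposition guarantees that $E$ again has rational cohomology of exactly this tensor product form (with a new odd generator in degree $k$ adjoined to the exterior factor). This is the single hypothesis that must be verified before the main criterion can be applied.

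Next I would appeal to the observation immediately following Theorem \ref{rown}, which records that the conclusion of that theorem does not require $X$ to be a literal product $R\times E$, but only that its rational cohomology ring be isomorphic to such a tensor product. By the previous step $E$ satisfies precisely this condition, so Theorem \ref{rown} applies to the given self-map $f$. The theorem then asserts that if $f$ were Lefschetz periodic point free, some eigenvalue of $A^{odd}(f)$ would have to be a root of unity. Taking the contrapositive, the assumed absence of roots of unity among the eigenvalues of $A^{odd}(f)$ forces $f$ to fail to be LPPF, so that $\mathcal{L}(f^k)\neq 0$ for some $k$; the Lefschetz Fixed Point Theorem then yields a fixed point of $f^k$ and hence a periodic point of $f$.

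The only point requiring care, and the closest thing to an obstacle, is that one must use the general Theorem \ref{rown} here rather than the sharper theorem proved for extended rational exterior spaces, since the even-degree part of $H^*(E,\mathbb{Q})$ is inherited from $B$ and in general need not have the special structure (at most one generator in each degree, each squaring to zero, their product being the top class) demanded of an extended rational exterior space. With that caveat noted, the corollary follows immediately by chaining the two cited results.
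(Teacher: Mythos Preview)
Your proposal is correct and matches the paper's approach: the corollary is stated in the paper immediately after the sentence ``Combining this proposition with our main result we get the following corollary,'' with no further argument given, so your explicit chaining of Proposition~\ref{prop z ciagiem dokl} with Theorem~\ref{rown} (via the observation that only the tensor-product cohomology structure is needed) is exactly what the authors intend. Your additional remark distinguishing Theorem~\ref{rown} from the sharper result for extended rational exterior spaces is a helpful clarification not present in the paper but entirely consistent with it.
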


\section{counterexamples}
In this section we give two counterexamples to the claim in \cite{duan}, that a tower of odd
dimensional sphere bundles is a rational exterior space. \\
\indent The first example is the Kodaira-Thurston manifold, which is a bundle of $\mathbb{S}^1$ over $\mathbb{T}^3$. This space was discussed in the paper of William Thurston, see \cite{th}.\\
This manifold is obtained by identifying the points of $\mathbb{T}^2\times\mathbb{R}\times\mathbb{S}^1$ in the following way
$$
(x,y,t,z)\sim(x,y,t+j,z+jx),~j\in\mathbb Z.
$$
One can easily see that this is equivalent to taking $\mathbb{T}^2\times I\times\mathbb{S}^1$ with the boundaries identified by the function:
$$
f(x,y,0,z)=(x,y,1,z+x).
$$
From this we can conclude that the projection onto the first three coordinates gives this manifold the stucture of a circle bundle over $\mathbb{T}^3$. The Kodaira-Thurston manifold (denoted $X$) has a trivialization of the cotangent bundle given by the forms: $dx, dy, dt, dz-tdx.$ 
Computing the de Rham cohomology of $X$ we get 
$$H^0(X,\mathbb{Q})=\mathbb{Q},
H^1(X,\mathbb{Q})=\mathbb{Q}^3,
H^2(X,\mathbb{Q})=\mathbb{Q}^4,
H^3(X,\mathbb{Q})=\mathbb{Q}^3,
H^4(X,\mathbb{Q})=\mathbb{Q}.$$
We can see that $X$ cannot be a rational exterior space, because it has inappropriate dimensions of cohomology groups.\\
\indent Now we will give an example of a $5$-sphere bundle over a simply connected space $\mathbb{S}^3\times\mathbb{S}^3$, which is not a rational exterior space and hence gives another counterexample to the claim from \cite{duan} (since it is a $5$-sphere bundle over the total space of a $3$-sphere bundle over a $3$-sphere). Let us consider a map $f:\mathbb{S}^3\times\mathbb{S}^3\to\mathbb{S}^6$ shrinking 3-cells to a point. It induces an isomorphism in the sixth cohomology. Let us pull back the bundle $T\mathbb{S}^6$ through this map. The unit sphere bundle $X$ of the vector bundle $f^*(T\mathbb{S}^6)$ turns out to be the desired bundle. To see this we compute the fifth and sixth cohomology groups of $X$ via the Gysin exact sequence with rational coefficients (see e.g. \cite{chern}):
\begin{equation*}
0\to H^5(X,\mathbb{Q})\to H^{0}(\mathbb{S}^3\times\mathbb{S}^3,\mathbb{Q})\xrightarrow{e\smile} H^{6}(\mathbb{S}^3\times\mathbb{S}^3,\mathbb{Q})\to H^6(X,\mathbb{Q})\to 0.
\end{equation*}
Where the zeroes on the left and right correspond to $H^{5}(\mathbb{S}^3\times\mathbb{S}^3,\mathbb{Q})$ and $H^{1}(\mathbb{S}^3\times\mathbb{S}^3,\mathbb{Q})$ respectively and $e$ is the euler class of $f^*(T\mathbb{S}^6)$. Due to the naturality of the euler class and $f$ being an isomorphism in the sixth cohomology the Euler class is easily computed to be equal to $2$ (since the euler characteristic of $\mathbb{S}^6$ is equal to $2$). This implies that the middle arrow is an isomorphism and so:
\begin{equation*}
H^5(X,\mathbb{Q})\cong H^6(X,\mathbb{Q})\cong 0.
\end{equation*}
Now using the Serre spectral sequence one can easily compute the rest of the cohomology groups and the only non-zero ones are:
\begin{eqnarray*}
H^0(X,\mathbb{Q})\cong H^{11}(X,\mathbb{Q})\cong \mathbb{Q},\\
H^3(X,\mathbb{Q})\cong H^8(X,\mathbb{Q})\cong \mathbb{Q}\oplus\mathbb{Q}.
\end{eqnarray*}
Hence it cannot be a rational exterior space (for example the cup product of any two elements of degree $3$ is zero which wouldn't be possible for a rational exterior space with third cohomology of dimesnion $2$).


\begin{thebibliography}{100}
\bibitem{chern} S.-S. Chern, E. Spanier, 
{\it The homology structure of sphere bundles},  Proc. Nat. Acad. Sci. U.S.A. 36 (1950), 248-255.
\bibitem{duan} H. Duan, {\it The Lefschetz numbers of iterated maps}, Topol Appl Vol. 67 Issue 1 (1995), 71-79.\bibitem{franks} J. Franks, {\it Homology and dynamical systems}, CBMS Regional Conf. Ser. in Math. 49 (1982), Providence, Rhode Island.
\bibitem{ah} A. Hatcher, {\it Algebraic topology}, Cambridge University Press, Cambridge, 2002.
\bibitem{ls} J. Llibre, V. F. Sirvent, {\it On Lefschetz periodic point free self-maps}, J. Fix Point Theory A. (2018) 20: 38. https://doi.org/10.1007/s11784-018-0498-5.
\bibitem{th} W. P. Thurston, {\it Some simple examples of symplectic manifolds}, Proc. Amer. Math. Soc. 55 (1976), 467-468.






\end{thebibliography}
\end{document}